\newtheorem{theorem}{Theorem}
\newtheorem{proposition}[theorem]{Proposition}
\theoremstyle{remark}
\newtheorem{definition}{Definition}[section]
\newtheorem*{acknowledgments}{Acknowledgments}
\theoremstyle{remark}
\begin{document}
\title{Band-Passes and Long Virtual Knot Concordance}
%\title{Twice the Arf Invariant for Long Virtual Knots}
\author{Micah Chrisman}
\address{
Department of Mathematics\\
Monmouth University \\
West Long Branch, NJ 07764 \\
USA
}
\email{mchrisma@monmouth.edu}

%\thanks{\emph{Affiliation.} Department of Mathematics, Monmouth University, West Long Branch, New Jersey, USA}

\begin{abstract} Every classical knot is band-pass equivalent to the unknot or the trefoil. The band-pass class of a knot is a concordance invariant. Every ribbon knot, for example, is band-pass equivalent to the unknot. Here we introduce the long virtual knot concordance group $\mathscr{VC}$. It is shown that for every concordance class $[K] \in \mathscr{VC}$, there is a $J \in [K]$ that is not band-pass equivalent to $K$ and an $L \in [K]$ that is not band-pass equivalent to either the long unknot or any long trefoil.  This is accomplished by proving that $v_{2,1}+v_{2,2} \pmod 2$ is a band-pass invariant but not a concordance invariant of long virtual knots, where $v_{2,1}$ and $v_{2,2}$ generate the degree two Polyak group for long virtual knots.  
\end{abstract}
\keywords{long virtual knot concordance group, band-pass move}
\subjclass[2010]{57M25, 57M27}
\maketitle

Band-pass equivalence of knots is generated by isotopy and the moves depicted in Figure \ref{fig_band_pass}. Every classical knot is band-pass equivalent to either the unknot or the right trefoil. In particular, every ribbon knot is band-pass equivalent to the unknot. Furthermore, the band-pass class of a knot is a concordance invariant. The band-pass class of a knot is completely determined by the Arf invariant. The Arf invariant is essentially combinatorial.  It can be defined as the coefficient $c_2(K)$ of $z^2$ in the Conway polynomial of $K$, modulo two. This classic story, due to Kauffman \cite{on_knots}, elegantly displays the interaction of topology and combinatorics in knot theory. The topological equivalence relation of \emph{concordance} is a subset of the combinatorial relation of \emph{band-pass equivalence}.

\begin{figure}[htb]
\begin{tabular}{|ccc|ccc|} \hline
 & & & & & \\
\begin{tabular}{c}
\def\svgwidth{.5in}
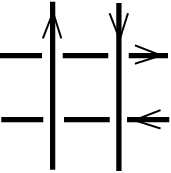 \end{tabular} & $\longleftrightarrow$ &  \begin{tabular}{c} \def\svgwidth{.5in}
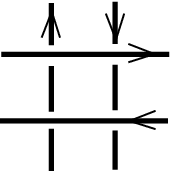 \end{tabular} & \begin{tabular}{c} \def\svgwidth{.5in}
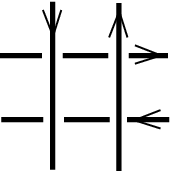 \end{tabular} &  $\longleftrightarrow$ & \begin{tabular}{c} \def\svgwidth{.5in}
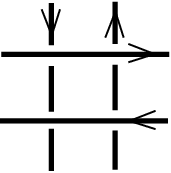 \end{tabular} \\  & & & & & \\ \hline
\end{tabular}
\caption{The two versions of the band-pass move.} \label{fig_band_pass}
\end{figure}

Virtual knot theory can be roughly defined as the study of properties of knot diagrams that are preserved under isotopy and stabilization of the ambient space. Virtual knots are thus partly combinatorial and partly topological. From this point of view, it is natural to reconsider the story of band-pass equivalence for concordance of virtual knots. This line of investigation was originally suggested by Kauffman \cite{lou_cob}. Here we consider long virtual knots. The main theorem is that every concordance class of long virtual knots contains representatives that are not band-pass equivalent to each other and representatives that are not band-pass equivalent to either a long unknot or a long trefoil. 
\newline
\newline
The motivation to study long virtual knots comes from two observations. In  Section \ref{sec_concordance}, we will show that long virtual knots form a group under concatenation. Thus long virtual knots generalize the well-known classical knot concordance group. The second observation is that the Arf invariant has a simple extension to long virtual knots.  With a choice of base point, $c_2(K)=v_{2,1}(K)=v_{2,2}(K)$ \cite{ckr} for all classical knots $K$, where $v_{2,1}, v_{2,2}$ generate the Goussarov-Polyak-Viro finite-type invariants of degree two for long virtual knots \cite{GPV}. Neither $v_{2,1} \pmod{2}$ nor $v_{2,2} \pmod{2}$ is a band-pass or concordance invariant for long virtual knots by themselves. However, it will be shown that $v_{2,1}+v_{2,2} \pmod{2}$ is a band-pass invariant but is not an invariant of virtual knot concordance. The main theorem is a direct consequence of this result.
\newline
\newline
An outline of this paper is as follows. Section \ref{sec_concordance} begins with a review of virtual knot concordance. A combinatorial proof that long virtual knots form a group is also given in this section. Section \ref{sec_band} is devoted to the band-pass invariant and the proof of the main theorem. Section \ref{sec_future} contains further discussion and indicates directions for future research.

\section{The Long Virtual Knot Concordance Group} \label{sec_concordance}

\subsection{Concordance}  Long virtual knot diagrams are immersions $\mathbb{R} \to \mathbb{R}^2$, identical with the $x$-axis outside a compact set, such that each self-intersection is a transversal double point. Each double point is marked as a classical or virtual crossing. These diagrams are considered equivalent up to the \emph{extended Reidemeister moves} \cite{KaV} (see Figure \ref{fig_rmoves}). All long knots are oriented from $-\infty \to \infty$. A \emph{long virtual link} is a virtual link in the usual sense except that exactly one component is a long virtual knot.  Long virtual knots were first introduced by Goussarov-Polyak-Viro \cite{GPV} in their study of finite-type knot invariants.

\begin{figure}[htb]
\begin{tabular}{|ccc|ccc|ccc|} \hline
 & & & & & & & & \\
\begin{tabular}{c}  \def\svgwidth{.5in} 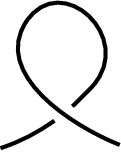 \end{tabular} & \begin{tabular}{c} $\leftrightharpoons$ \end{tabular} & 
\begin{tabular}{c}  \def\svgwidth{.5in} 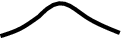 \end{tabular} & \begin{tabular}{c}  \def\svgwidth{.52in} 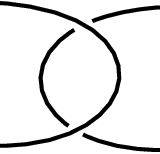 \end{tabular} & \begin{tabular}{c} $\leftrightharpoons$ \end{tabular} & 
\begin{tabular}{c}  \def\svgwidth{.52in} 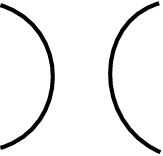 \end{tabular} & \begin{tabular}{c}  \def\svgwidth{.5in} 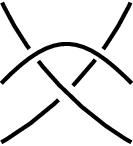 \end{tabular} & \begin{tabular}{c} $\leftrightharpoons$ \end{tabular} & 
\begin{tabular}{c}  \def\svgwidth{.5in} 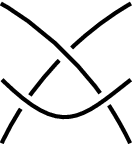 \end{tabular}   \\  
 & $\Omega 1$ & &  & $\Omega 2$ & & &$\Omega 3$ & \\\hline
 & & & & & & & & \\
\begin{tabular}{c}  \def\svgwidth{.5in} 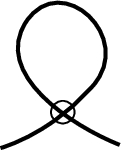 \end{tabular} & \begin{tabular}{c} $\leftrightharpoons$ \end{tabular} &  \begin{tabular}{c}  \def\svgwidth{.5in} \input{r1_R.eps_tex} \end{tabular} & \begin{tabular}{c}  \def\svgwidth{.52in} 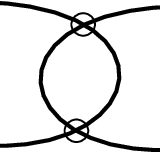 \end{tabular} & \begin{tabular}{c} $\leftrightharpoons$ \end{tabular} & 
\begin{tabular}{c}  \def\svgwidth{.52in} \input{r2_R.eps_tex} \end{tabular} & \begin{tabular}{c}  \def\svgwidth{.5in} 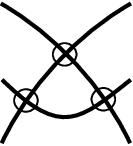 \end{tabular} & \begin{tabular}{c} $\leftrightharpoons$ \end{tabular} & 
\begin{tabular}{c}  \def\svgwidth{.5in} 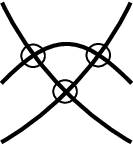 \end{tabular}   \\  
 & $v\Omega 1$ & &  & $v\Omega 2$ & & & $v\Omega 3$ & \\ \hline
\multicolumn{9}{|c|}{} \\
\multicolumn{9}{|c|}{\begin{tabular}{ccc}\begin{tabular}{c}  \def\svgwidth{.5in} 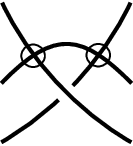 \end{tabular} & \begin{tabular}{c} $\leftrightharpoons$ \end{tabular} & \begin{tabular}{c}  \def\svgwidth{.5in} 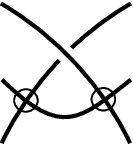 \end{tabular} \\ & $v\Omega 4$ & \end{tabular}} \\ \hline
\end{tabular} \caption{The extended Reidemeister moves.}\label{fig_rmoves}
\end{figure}

Before discussing virtual knot concordance, it is prudent to review concordance of classical knots. Let $K_0,K_1$ be oriented knots in $\mathbb{S}^3$. Then $K_0$, $K_1$ are said to be (smoothly) \emph{cobordant} if there is a compact, connected, oriented, and smooth surface $S$ that is properly embedded in $\mathbb{S}^3 \times [0,1]$ so that $\partial S=K_1 \sqcup -K_0$ and $S \cap \mathbb{S}^3 \times \{i\}=(-1)^{i+1} K_i$, for $i=0,1$. Here $-K$ denotes $K$ with the opposite orientation. If $S$ may be chosen to be an annulus $\mathbb{S}^1 \times [0,1]$, then $K_0$ and $K_1$ are said to be (smoothly) \emph{concordant}. 
\newline
\newline
The requirement that the surface $S$ be smooth allows for a combinatorial perspective of cobordism. Let $\mu:S \to [0,1]$ denote the inclusion $S \to \mathbb{S}^3 \times [0,1]$ followed by the projection to $[0,1]$. By Morse theory, $\mu$ may be chosen so that all the critical points appear as local maximums, local minimums, or saddles (see Figure \ref{fig_morse_crit}). The critical points of $\mu$ may further be assumed to all have different critical values. For regular values $t \in [0,1]$, $S \cap \mathbb{S}^3 \times \{t\}$ is an oriented link in $\mathbb{S}^3$. In a neighborhood of a local extremum, we see either the \emph{birth} or \emph{death} of a small unknot disjoint from all other components of the link (see Figure \ref{fig_concordance}, left). In a neighborhood of a saddle, we see a transition from one type of hyperbola to another, as in Figure \ref{fig_concordance}, right.  In between the critical points, the link $S \cap \mathbb{S}^3 \times \{t\}$ undergoes an ambient isotopy in $\mathbb{S}^3$. A cobordism can thus be visualized as a \emph{movie} consisting of a finite sequence of births, deaths, saddles, and Reidemeister moves.
\newline

\begin{figure}
\begin{tabular}{|ccc|} \hline
 & & \\
\begin{tabular}{c}  \def\svgwidth{1in} 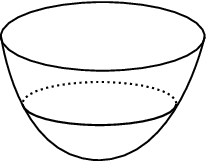 \end{tabular} & \begin{tabular}{c}  \def\svgwidth{1in} 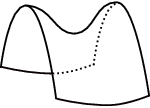 \end{tabular} & \begin{tabular}{c}  \def\svgwidth{1in} 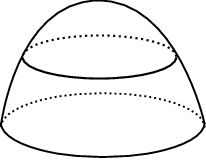 \end{tabular} \\ 
``death'' & ``saddle'' & ``birth'' \\ \hline
\end{tabular} \caption{Critical points of Morse functions.}\label{fig_morse_crit}
\end{figure}

To the map $\mu:S\to [0,1]$, we may associate its \emph{Reeb graph} $\Gamma_{\mu}$. The Reeb graph is the $1$-complex obtained by identifying to a point each path component of each level set $\mu^{-1}(t)$. The vertices of $\Gamma_{\mu}$ are thus in one-to-one correspondence with the set of critical points of $\mu$ together with two points representing $K_0$ and $K_1$. Each edge corresponds to a path through time of a link component. Every univalent vertex of $\Gamma_{\mu}$ corresponds to either $K_0$, $K_1$, a birth, or a death. Each trivalent vertex of $\Gamma_{\mu}$ is a saddle. See Figure \ref{fig_reeb}.

\begin{figure}[htb]
\begin{tabular}{|c|c|}  \hline
\begin{tabular}{c} \\
\def\svgwidth{2in}
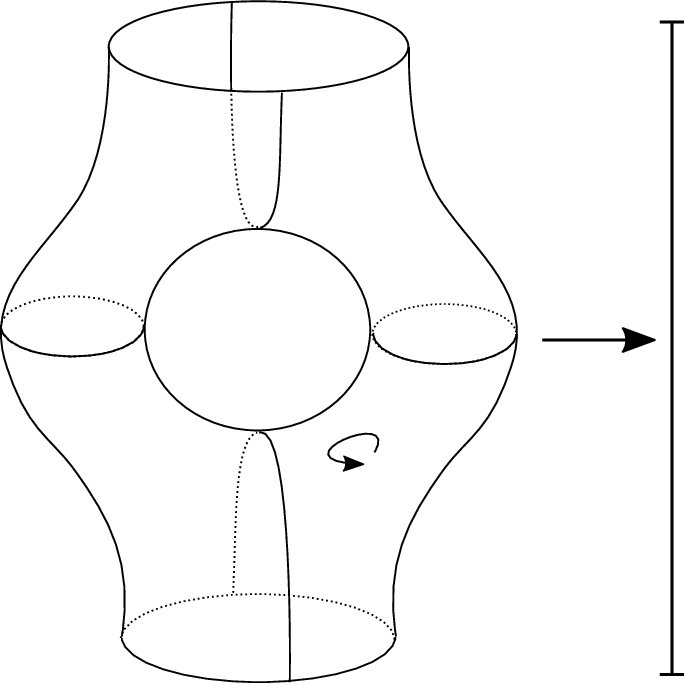 
\end{tabular}
& 
\begin{tabular}{c}
\def\svgwidth{.53in}
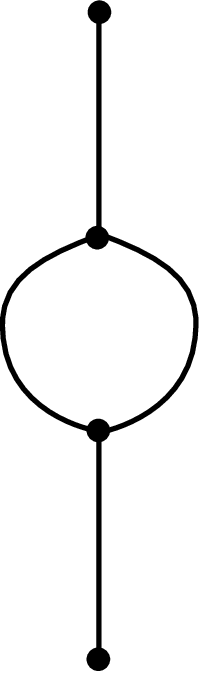 
\end{tabular} \\ \hline
\end{tabular} 
\caption{Schematic of a cobordism (left) and its Reeb graph (right).}\label{fig_reeb}
\end{figure}

It follows that  $\Gamma_{\mu}$ can be reconstructed from the cobordism movie of births, deaths, saddles, and Reidemeister moves. The vertices of $\Gamma_{\mu}$ are as described above. Each knot appearing in the movie is an edge of $\Gamma_{\mu}$. An edge is incident to a vertex if its knot is involved in a saddle, birth, or death at the corresponding critical point (or if it is $K_0$ or $K_1$). For a concordance of knots, the cobordism surface must be an annulus. Also note that the Euler characteristic can be computed from the critical point data of births $b$, deaths $d$, and saddles $s$. A concordance of knots is therefore identified with a movie that begins with $K_0$, ends with $K_1$, has a connected Reeb graph, and satisfies $\#b-\#s+\#d=0$.
\newline
\newline
This motivates the definition of concordance for closed and long virtual knots. The combinatorial formulation used here is adapted from Kauffman \cite{DKK,lou_cob}. Earlier topological formulations are due to Carter-Kamada-Saito \cite{CKS} and Turaev \cite{turaev_cobordism} (see also Fenn-Rourke \cite{fenn_rack}). The pairwise equivalence of these definitions follows from \cite{CKS}.

\begin{definition}[Concordance] \label{defn_concordance} Two (long or closed) virtual knots $K_0$ and $K_1$ are said to be \emph{concordant} if they may be obtained from one another by a finite sequence of extended Reidemeister moves, births $b$, deaths $d$, and saddle moves $s$.  The sequence must have a connected Reeb graph and satisfy $\#b-\#s+\#d=0$. If $K_0$ and $K_1$ are concordant, we write $K_0 \asymp K_1$. 
\end{definition}

\begin{figure}[htb]
\begin{tabular}{|c|c|}  \hline
\begin{tabular}{c} \\
$K \sqcup \begin{array}{c} \def\svgwidth{.35in}
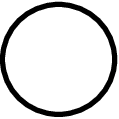 \end{array} $\\ \\ \ $\text{birth} \uparrow \,\,\,\, \downarrow \text{death}$ \\ \\
 $K$
\end{tabular}
& 
\begin{tabular}{ccc}
\multicolumn{3}{c}{\underline{saddle move}} \\ & &\\
\begin{tabular}{c}
\def\svgwidth{.73in}
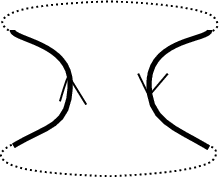 
\end{tabular}
& $\leftrightarrow$ &
\begin{tabular}{c}
\def\svgwidth{.73in}
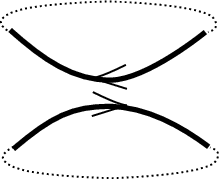 
\end{tabular}  \end{tabular} \\ \hline
\end{tabular} 
\caption{Births, Deaths, and Saddles.}\label{fig_concordance}
\end{figure}

\textbf{Example:} A concordance movie  beginning with the ``fly'', $F$, is depicted in Figure \ref{fig_conc_ex}. The red arc indicates a saddle move. $F$ is well-known to be a non-trivial long virtual knot (cf. \cite{quatlong}). The Reeb graph of this movie is given in on the far right of Figure \ref{fig_conc_ex}.

\begin{figure}[htb]
\begin{tabular}{|c||c||c||c|c|} \hline 
\begin{tabular}{c} \def\svgwidth{1in}
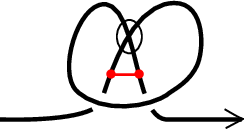 \end{tabular} &  \begin{tabular}{c} \def\svgwidth{1in}
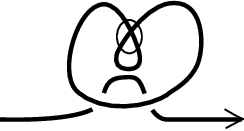 \end{tabular} & \begin{tabular}{c} \def\svgwidth{1in}
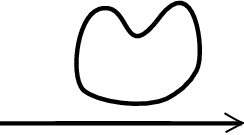 \end{tabular} & \begin{tabular}{c} \\  \\ \def\svgwidth{1in}
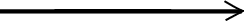 \end{tabular} & \begin{tabular}{c} \def\svgwidth{.75in}
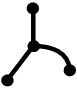 \end{tabular} \\ \hline \end{tabular} 
\caption{(Left) The ``fly'', $F$ is concordant to the unknot. (Right) The Reeb graph of the movie on the left.} \label{fig_conc_ex}
\end{figure}

\subsection{The Long Virtual Knot Concordance Group}Long virtual knots form a monoid under the operation of \emph{concatenation}: $K_1 \# K_2$ means draw $K_2$ to the right of $K_1$. The unit, denoted $1$, is the long knot given by the $x$-axis. It is well known that concatenation is not commutative \cite{quatlong, manturov_compact_long}. It follows immediately from Definition \ref{defn_concordance} that concatenation is well defined on concordance classes: if $L_1 \asymp R_1$ and $L_2 \asymp R_2$, then $L_1 \# L_2 \asymp R_1 \# R_2$.
\newline
\newline
The long virtual knot concordance group $\mathscr{VC}$ is defined by analogy to the classical knot case (see \cite{on_knots} for the classical definition). The elements are concordance classes, the identity is $1$, the operation is $\#$, and inverses are defined as follows. Let $r(K)$ denote the reflection of the diagram $K$ about a vertical line $l$ intersecting $K$ in one point to the right of a closed $2$-ball that contains all points of $K$ not on the $x$-axis. Let $-r(K)$ denote the change of orientation, so that $-r(K)$ is oriented from $-\infty \to \infty$. Then $K^{-1}:=-r(K)$. 

\begin{figure}[htb]
\begin{tabular}{|c|c|} \hline
\begin{tabular}{c}
\def\svgwidth{2.5in}
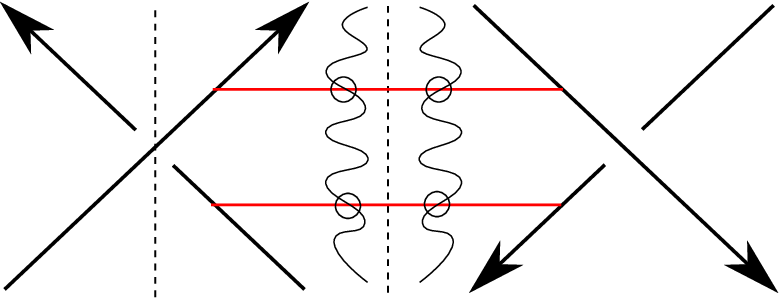 \end{tabular} & \begin{tabular}{c} \def\svgwidth{2.5in}
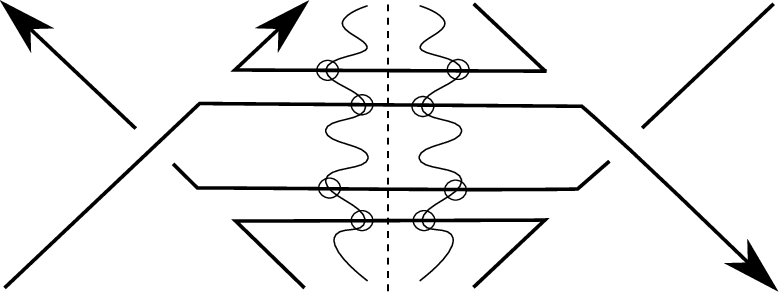 \end{tabular} \\ \hline  
\end{tabular}
\caption{Two saddle moves used in the proof of Theorem 1.} \label{fig_inverse}
\end{figure}
\begin{theorem} $(\mathscr{VC},\#,1)$ is a group. 
\end{theorem}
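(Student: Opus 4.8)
The plan is to verify the monoid axioms cheaply and then to concentrate on the existence of inverses, which is the only step with genuine content. Associativity of $\#$ is immediate at the level of diagrams: $(K_1\#K_2)\#K_3$ and $K_1\#(K_2\#K_3)$ are the same diagram up to a horizontal rescaling, hence equal up to planar isotopy (a detour move, if virtual crossings are carried along). Likewise $1\#K$ and $K\#1$ are obtained from $K$ by sliding the trivial $x$-axis piece past the compact part of $K$, again a detour move, so $1$ is a two-sided identity. Well-definedness of $\#$ on $\asymp$-classes was already recorded just before the statement. It therefore suffices to show that for every long virtual knot $K$,
\[ K\#(-r(K))\asymp 1 \quad\text{and}\quad (-r(K))\#K\asymp 1 . \]
Moreover the second relation is the first applied to $-r(K)$ in place of $K$, since $-r(-r(K))=K$; so only $K\#(-r(K))\asymp 1$ must be proved.

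For this I would produce an explicit null-concordance (a ``movie'') built from the mirror symmetry of the diagram, in the spirit of the classical fact that the connected sum of a knot with its reverse mirror image bounds an obvious ribbon disk. Place the reflection line $l$ from the definition of $K^{-1}$ through the point $c$ where the right end of $K$ is concatenated to the left end of $-r(K)$. By construction the part of the diagram right of $l$ is the image, under the reflection $m$ in $l$, of the part left of $l$; so, forgetting orientation, $m$ carries the diagram $K\#(-r(K))$ --- crossings and all --- to itself, interchanging a crossing near a point $x$ of the left copy of $K$ with the crossing near $m(x)$ of the right copy, and all virtual crossings lie inside these two mirrored copies. The movie then runs as follows: a birth producing $\big(K\#(-r(K))\big)\sqcup O$ with $O$ a small trivial circle near $c$; the saddle of Figure~\ref{fig_inverse} (left), which uses $O$ to band the two strands meeting $l$ near $c$ and so merges the two mirror halves along the axis; an isotopy --- with every virtual crossing absorbed into a detour move --- that cancels the mirrored copies of $K$ against one another, each crossing of $K$ meeting its mirror crossing of opposite sign; the saddle of Figure~\ref{fig_inverse} (right), which splits the resulting surplus off as a trivial circle $O'$; and finally a death removing $O'$. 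What is left is $1$, and $\#b-\#s+\#d=1-2+1=0$, so the movie is a concordance. Hence $K\#(-r(K))\asymp 1$, each class $[K]$ has two-sided inverse $[-r(K)]$, and $(\mathscr{VC},\#,1)$ is a group.

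The step I expect to be the main obstacle is making this movie rigorous in the virtual category rather than appealing to the classical picture: one must check that the two intermediate diagrams are exactly as described and that consecutive frames differ by an extended Reidemeister move, a birth, a death, or a saddle. The delicate point is the cancellation isotopy after the first saddle; classically this is nothing more than ``the obvious ribbon disk'', but here one must confirm both that $m$ is a genuine symmetry of the crossing data, not merely of the underlying immersed curve, and that dragging the virtual crossings of $K$ through the isotopy is legitimate --- which is exactly what the detour move provides.
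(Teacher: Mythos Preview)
Your movie does not work as written, and the failure is exactly at the step you flag. A birth of a trivial circle followed by a single saddle merging that circle into the long component produces a diagram that is \emph{isotopic} to $K\#(-r(K))$ itself; births and saddles introduce no classical crossings, so the subsequent ``cancellation isotopy'' would have to carry $K\#(-r(K))$ to a diagram with no classical crossings by extended Reidemeister moves alone. That would force $K\#(-r(K))$ to be the trivial long virtual knot, which is false already for classical $K$ (take $K$ the long trefoil: the closure of $K\#(-r(K))$ is the square knot). So there is no such isotopy in general, and no concordance with $\#b-\#s+\#d=1-2+1$ of the shape you describe. You have also misread Figure~\ref{fig_inverse}: the two panels are the \emph{before} and \emph{after} of a pair of saddles performed near a single crossing $x$ and its mirror $m(x)$, not two global saddles near the concatenation point; the labels $x$, $m(x)$, $c$, $l$ in the figure already signal this, and $c$ in the paper is a vertical line carrying the crossing points, not the concatenation point.

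The paper's argument supplies precisely the missing mechanism. One first detours $K$ so that all of its classical double points lie on a vertical line $c$ to the left of the mirror line $l$. For \emph{each} classical crossing $x$ of $K$ one performs two saddles along short symmetric arcs joining the strands at $x$ to the strands at $m(x)$ (all intersections of these arcs with the diagram being virtual). This rewires the four local strands so that $x$ and $m(x)$ now sit on two parallel arcs and cancel by a Reidemeister~II move; moreover each such pair of saddles adds exactly two closed components. After doing this at all $n$ crossings one is left with the long unknot together with $2n$ trivial circles, which are removed by $2n$ deaths; the count is $\#b-\#s+\#d=0-2n+2n=0$. The point is that the R2 cancellation of $x$ against $m(x)$ is only available \emph{after} the saddles have reconnected the strands; without them the two crossings sit on different arcs of the diagram and cannot be brought together.
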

\begin{proof} We will show $K \# -r(K) \asymp 1$. Any two long virtual knots having the same Gauss diagram on $\mathbb{R}$ are equivalent by moves $v\Omega 1-v\Omega 4$. We may thus assume that $K$ is given so that all classical crossings $z$ lie on a line $c$ perpendicular to the $x$-axis. The classical crossings are connected together by arcs to produce the correct Gauss diagram of $K$, inserting virtual crossings where necessary. Furthermore assume that the the line $l$ defining $K\#-r(K)$ is to the right of $c$. For each $z$, let $m(z)$ denote the corresponding crossing in $-r(K)$. For each $z$, draw a pair of parallel red arcs near the crossing point as in Figure \ref{fig_inverse}.
\newline
\newline
It may be assumed that all intersections of the red arcs with $K \# -r(K)$ are transversal and that the arcs are symmetric with respect to $l$. Apply two saddle moves on the red arcs. This gives Figure \ref{fig_inverse}, right, locally for each $z$. All crossings coming from the red arcs with $K \# -r(K)$ are virtual.  Let $L$ be the resulting long virtual link.
\newline
\newline
\emph{Observation:} The cobordism given in Figure \ref{fig_inverse} adds two components to the long virtual link diagram. To see this, begin constructing $L$ with some chosen crossing $z$ of $K$. Perform one of the two saddle moves on the red arcs. By Figure \ref{fig_concordance}, a saddle move beginning with two arcs from the same component increases the number of components by one. By symmetry, the endpoints of the other red arc for $z$ and $m(z)$ must now lie in the same component. Surgering along the second red arc thus also increases the number of components by one. Proceed with crossings $y \ne z$ of $K$. 
\newline
\newline
\emph{Observation:} For each $z$, the pair of crossings $z,m(z)$ of $L$ can be removed with a $\Omega 2$ move. All classical crossings of $L$ are annihilated in this way.
\newline
\newline
Thus $L$ is equivalent to a long virtual link having no classical crossings. It has $1$ as a component together with $2n$ disjoint circles, where $n$ is the number of classical crossings of $K$. Kill each of these disjoint circles to obtain a cobordism to $1$. Since the Reeb graph is connected and $\#b -\#s+\#d=-2n+2n=0$, it is a concordance.
\end{proof}

\section{The Band-Pass Invariant} \label{sec_band}

Two long virtual knots will be said to be \emph{band-pass equivalent} if they are related by a finite sequence of extended Reidemeister moves and band-pass moves (see Figure \ref{fig_band_pass}). Here we will prove that, contrary to the classical case, the virtual knot concordance relation is not a subset of the band-pass relation. To do this, it is necessary to find a band-pass invariant that is not a concordance invariant. It will be shown that a simple modification of the Arf invariant satisfies this criterion. We begin by reviewing the definition of the Arf invariant of a classical knot in terms of the degree two Goussarov-Polyak-Viro finite-type invariants $v_{2,1}$ and $v_{2,2}$ (see \cite{GPV}).
\newline
\newline
Let $D_{K}$ be a Gauss diagram of a long virtual knot $K$. A \emph{subdiagram} $D'$ of $D_K$ is a diagram whose arrows are a subset of the arrows of $D_K$. If $D'$ is a subdiagram of $D_K$ we write $D'<D_K$. Define a formal sum of diagrams:
\[
I(D_{K})=\sum_{D' < D_{K}} D'.
\]
A pairing $\left< \cdot,\cdot\right>$  is defined on Gauss diagrams by $\left<X,Y\right>=1$ if $X=Y$ and $\left<X,Y\right>=0$ otherwise. The pairing is extended by linearity to the free abelian group generated by Gauss diagrams. Then the long virtual knot invariants $v_{2,1}, v_{2,2}$ are given by:
\[
v_{2,1}(K) = \left< \begin{array}{c}
\def\svgwidth{1in}
%% Creator: Inkscape 0.91_64bit, www.inkscape.org
%% PDF/EPS/PS + LaTeX output extension by Johan Engelen, 2010
%% Accompanies image file '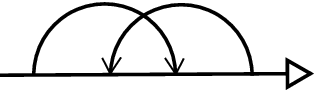' (pdf, eps, ps)
%%
%% To include the image in your LaTeX document, write
%%   \input{<filename>.pdf_tex}
%%  instead of
%%   \includegraphics{<filename>.pdf}
%% To scale the image, write
%%   \def\svgwidth{<desired width>}
%%   \input{<filename>.pdf_tex}
%%  instead of
%%   \includegraphics[width=<desired width>]{<filename>.pdf}
%%
%% Images with a different path to the parent latex file can
%% be accessed with the `import' package (which may need to be
%% installed) using
%%   \usepackage{import}
%% in the preamble, and then including the image with
%%   \import{<path to file>}{<filename>.pdf_tex}
%% Alternatively, one can specify
%%   \graphicspath{{<path to file>/}}
%% 
%% For more information, please see info/svg-inkscape on CTAN:
%%   http://tug.ctan.org/tex-archive/info/svg-inkscape
%%
\begingroup%
  \makeatletter%
  \providecommand\color[2][]{%
    \errmessage{(Inkscape) Color is used for the text in Inkscape, but the package 'color.sty' is not loaded}%
    \renewcommand\color[2][]{}%
  }%
  \providecommand\transparent[1]{%
    \errmessage{(Inkscape) Transparency is used (non-zero) for the text in Inkscape, but the package 'transparent.sty' is not loaded}%
    \renewcommand\transparent[1]{}%
  }%
  \providecommand\rotatebox[2]{#2}%
  \ifx\svgwidth\undefined%
    \setlength{\unitlength}{150.62436917bp}%
    \ifx\svgscale\undefined%
      \relax%
    \else%
      \setlength{\unitlength}{\unitlength * \real{\svgscale}}%
    \fi%
  \else%
    \setlength{\unitlength}{\svgwidth}%
  \fi%
  \global\let\svgwidth\undefined%
  \global\let\svgscale\undefined%
  \makeatother%
  \begin{picture}(1,0.27951749)%
    \put(0,0){\includegraphics[width=\unitlength]{v21.eps}}%
  \end{picture}%
\endgroup%
 \end{array}, I(D_{K})\right>, \,\,\,
v_{2,2}(K) = \left<\begin{array}{c}
\def\svgwidth{1in}
%% Creator: Inkscape 0.91_64bit, www.inkscape.org
%% PDF/EPS/PS + LaTeX output extension by Johan Engelen, 2010
%% Accompanies image file '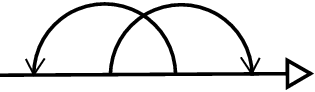' (pdf, eps, ps)
%%
%% To include the image in your LaTeX document, write
%%   \input{<filename>.pdf_tex}
%%  instead of
%%   \includegraphics{<filename>.pdf}
%% To scale the image, write
%%   \def\svgwidth{<desired width>}
%%   \input{<filename>.pdf_tex}
%%  instead of
%%   \includegraphics[width=<desired width>]{<filename>.pdf}
%%
%% Images with a different path to the parent latex file can
%% be accessed with the `import' package (which may need to be
%% installed) using
%%   \usepackage{import}
%% in the preamble, and then including the image with
%%   \import{<path to file>}{<filename>.pdf_tex}
%% Alternatively, one can specify
%%   \graphicspath{{<path to file>/}}
%% 
%% For more information, please see info/svg-inkscape on CTAN:
%%   http://tug.ctan.org/tex-archive/info/svg-inkscape
%%
\begingroup%
  \makeatletter%
  \providecommand\color[2][]{%
    \errmessage{(Inkscape) Color is used for the text in Inkscape, but the package 'color.sty' is not loaded}%
    \renewcommand\color[2][]{}%
  }%
  \providecommand\transparent[1]{%
    \errmessage{(Inkscape) Transparency is used (non-zero) for the text in Inkscape, but the package 'transparent.sty' is not loaded}%
    \renewcommand\transparent[1]{}%
  }%
  \providecommand\rotatebox[2]{#2}%
  \ifx\svgwidth\undefined%
    \setlength{\unitlength}{150.62436917bp}%
    \ifx\svgscale\undefined%
      \relax%
    \else%
      \setlength{\unitlength}{\unitlength * \real{\svgscale}}%
    \fi%
  \else%
    \setlength{\unitlength}{\svgwidth}%
  \fi%
  \global\let\svgwidth\undefined%
  \global\let\svgscale\undefined%
  \makeatother%
  \begin{picture}(1,0.27951749)%
    \put(0,0){\includegraphics[width=\unitlength]{v22.eps}}%
  \end{picture}%
\endgroup%
 \end{array} , I(D_{K})\right>. 
\]
Note here the convention of \cite{GPV} is used, so that if no crossing signs appear on the arrows of a diagram, then the diagram represents the weighted sum of all possible ways the diagram may be signed and the weight of each summand is the product of the signs of all its arrows. For example, the diagram for $v_{2,1}$ represents four diagrams. Each has a pair of arrows as depicted and the signs vary as $(\oplus,\oplus)$, $(\oplus,\ominus)$, $(\ominus,\oplus)$, and $(\ominus,\ominus)$.

\begin{figure}[htb]
\begin{tabular}{|c|c|} \hline &  \\
\begin{tabular}{c} \def\svgwidth{2.1in}
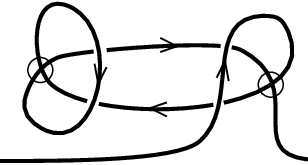 \end{tabular} & \begin{tabular}{c} \def\svgwidth{2.2in}
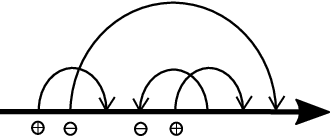 \end{tabular}\\ \hline \end{tabular} 
\caption{(Left) The long virtual knot $L$ shows $v_{2,1} \pmod{2}$ and $v_{2,2}\pmod{2}$ and are not band-pass invariant. (Right) A Gauss diagram of $L$.} \label{fig_fly}
\end{figure}

The Arf invariant of a classical knot $K$ is the coefficient $c_2(K)$ of $z^2$ in the Conway polynomial of $K$, taken modulo two.  On the other hand, we have that $c_2(K)=v_{2,1}(K)=v_{2,2}(K)$. This follows from the fact that the dimension of the the degree two finite-type invariants of classical knots is $1$ and all three invariants have the same value on the right trefoil. Thus the Arf invariant of a classical knot is given by $v_{2,1} (K) \pmod 2 \equiv v_{2,2} (K) \pmod 2$.
\newline
\newline
Although the Arf invariant is defined for long virtual knots, it is neither a band-pass invariant nor a concordance invariant. Let $F$ be the ``fly'' depicted in Figure \ref{fig_conc_ex}.  Its Gauss diagram coincides with the defining Gauss diagram of $v_{2,2}$. Thus, $v_{2,2}(F) \equiv 1 \pmod{2}$ whereas $v_{2,2}$ vanishes on the unknot. Since $F \asymp 1$, $v_{2,2} \pmod{2}$ is not a concordance invariant. Now perform the band-pass move on the long virtual knot $L$ on the right in Figure \ref{fig_fly} to obtain a diagram $R$. Then  $v_{2,2}(L) \pmod{2} \not \equiv v_{2,2}(R) \pmod{2}$. Similarly, $v_{2,1} \pmod{2}$ is neither a band-pass nor concordance invariant. Also note that $v_{2,1}(F)+v_{2,2}(F) \equiv 1 \pmod 2$. Thus, $v_{2,1}+v_{2,2} \pmod 2$ is not a concordance invariant.

\begin{theorem} If $K_1$ and $K_2$ are obtained from one another by a sequence of extended Reidemeister moves and band pass moves, then:
\begin{eqnarray} \label{eqn}
v_{2,1}(K_1)+v_{2,2}(K_1) &\equiv& v_{2,1}(K_2)+v_{2,2}(K_2)  \pmod{2}.
\end{eqnarray}
\end{theorem}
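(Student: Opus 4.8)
The plan is to reduce the statement to a single band-pass move and then read off its effect on the Gauss-diagram formulas for $v_{2,1}$ and $v_{2,2}$. Since $v_{2,1}$ and $v_{2,2}$ are the degree-two Goussarov--Polyak--Viro finite-type invariants, they are already unchanged by the extended Reidemeister moves, so it suffices to prove that $v_{2,1}+v_{2,2}\pmod{2}$ is unchanged by one band-pass move of each of the two types in Figure \ref{fig_band_pass}. The first step is to record what such a move does to a Gauss diagram: it is supported in a ball, so it only touches the arrows coming from the classical crossings inside the move, and --- reading off Figure \ref{fig_band_pass} with its orientation convention --- it modifies these finitely many arrows in a prescribed way (in the standard presentation it reverses them and flips their signs) while leaving the position of every arrow endpoint along $\mathbb{R}$ fixed. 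Call these the \emph{affected} arrows and everything else in $D_K$ \emph{unaffected}.

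The second step is a bilinear expansion. Writing $v_{2,i}(K)=\sum \varepsilon(p)\varepsilon(q)$, the sum taken over unordered pairs $\{p,q\}$ of arrows of $D_K$ whose induced subdiagram has the shape of the degree-two diagram $G_i$ occurring in the formula (here $\varepsilon$ is the sign of an arrow), I would split each sum by how many of $p,q$ are affected. The term with neither affected does not change. The term with both affected is a purely \emph{local} quantity $\Lambda_i$ depending only on the tangle of the move. The term with exactly one affected can be written $\sum_{q\ \mathrm{unaffected}}\varepsilon(q)\,M_i(q)$, where $M_i(q)$ is the sum of $\varepsilon(p)$ over affected $p$ with $\{p,q\}$ of shape $G_i$; crucially $M_i(q)$ depends only on the direction of $q$ and on the order of the two endpoints of $q$ among the finitely many affected endpoints. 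Hence the change in $v_{2,1}+v_{2,2}$ equals $\Delta(\Lambda_1+\Lambda_2)+\sum_{q}\varepsilon(q)\,\Delta\!\left(M_1(q)+M_2(q)\right)$.

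The proof then reduces to two finite verifications. For the local part, compare the before and after pictures of the move and check that $\Lambda_1+\Lambda_2$ changes by an even integer. For the mixed part, enumerate the finitely many combinatorial positions an unaffected arrow $q$ can occupy relative to the affected endpoints, and check for each position that $M_1(q)+M_2(q)$ changes by an even number; since $\varepsilon(q)=\pm1$, this makes the entire mixed contribution even. Adding the two pieces gives $\Delta(v_{2,1}+v_{2,2})\equiv0\pmod{2}$, and since the second version of the band-pass move is handled identically (or deduced from the first by the evident symmetry), combining with extended Reidemeister invariance yields \eqref{eqn}.

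I expect the mixed verification to be the main obstacle, and it is precisely there that the \emph{sum} $v_{2,1}+v_{2,2}$ --- and not either summand --- is forced on us: reversing an affected arrow and flipping its sign typically carries a pair $\{p,q\}$ off the shape $G_1$ and onto $G_2$ (or conversely), so that $\Delta M_1(q)$ and $\Delta M_2(q)$ are each odd while their sum is even. Making this cancellation transparent --- ideally by producing an involution on position classes, or on pairs of affected arrows, that interchanges the $G_1$-count with the $G_2$-count, rather than grinding through all cases --- is the delicate point; it is also where the precise orientations of Figure \ref{fig_band_pass} are indispensable, since a different sign pattern on the affected crossings would destroy the parity.
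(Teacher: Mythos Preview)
Your plan is essentially the paper's own proof: reduce to one band-pass, split pairs of arrows by how many lie in the affected set $\Delta=\{\alpha,\beta,\gamma,\delta\}$, dismiss the $0$-case, and handle the $1$- and $2$-cases separately. Your anticipated mechanism for the mixed case --- that flipping an affected arrow swaps a $G_1$-pair for a $G_2$-pair, so an involution on affected arrows should do the work --- is exactly what the paper exploits: for every $x\in\Delta$ crossed by an external arrow $\eta$ there is a companion $y\in\Delta$ (the arrow sharing an adjacent endpoint with $x$, since each of the four strands meets two crossings of the move consecutively) that $\eta$ also crosses, and the pair $(x,\eta),(y,\eta)$ contributes evenly to the two sides of \eqref{eqn}. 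So you have correctly guessed the key cancellation rather than needing the brute enumeration you were bracing for.

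The one point to tighten is your treatment of the ``local'' term. The quantity $\Lambda_i$ is \emph{not} determined by the planar tangle alone: which of the four affected arrows intersect in the Gauss diagram, and with what over/under pattern relative to the basepoint, depends on how the eight arc-ends of the band-pass reconnect in the rest of the knot and on where $\infty$ sits. The paper makes this explicit: up to orientation there are five connectivity patterns, each with four basepoint placements, and the check that $\Delta(\Lambda_1+\Lambda_2)\equiv 0\pmod 2$ is carried out configuration by configuration (Figure \ref{fig_config}). Your sentence ``compare the before and after pictures of the move'' underestimates this; once you acknowledge the finite list of global configurations, your argument and the paper's coincide.
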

\begin{proof} It suffices to prove the theorem when $K_1$ and $K_2$ differ by a single band pass move. Let $D_1, D_2$ represent Gauss diagrams for the long virtual knots on the left, right, respectively, of a band-pass move in Figure \ref{fig_band_pass}. $D_1$ and $D_2$ differ in a set $\Delta=\{\alpha,\beta,\gamma,\delta\}$ containing exactly four arrows. Here we use the same name for the arrow in $D_1$ and the arrow in $D_2$ obtained by toggling the direction and sign of that arrow. 
\newline
\newline
Suppose $D_1'<D_1$  is any pair of intersecting arrows. $D_1'$ can contain none, one, or two arrows from $\Delta$. If none, then $D_1'$ contributes equally to both sides of Equation \ref{eqn}. Contributions for the other cases depend upon how the arrows of $\Delta$ are configured in the Gauss diagram. The two band-pass moves in Figure \ref{fig_band_pass} have five essentially different ways in which the depicted arc ends may be connected to form a single component, up to change of orientation. The number of cases may be reduced from five to three by noting that an $\Omega 2$ move converts the fourth and fifth configurations into the first and second, respectively. For each configuration, the point at infinity for $K_1$ may be chosen on any of the four dashed connecting arcs. Figure \ref{fig_config} shows one choice; other choices follow similarly.  
\newline
\newline
Consider all subdiagrams of $D_1$ and $D_2$ containing exactly two arrows from $\Delta$. Consulting Figure \ref{fig_config}, one can verify that the total contribution of such subdiagrams to Equation \ref{eqn} is the same on both sides. For example, in the top configuration the pairs $\alpha,\gamma$ and $\gamma,\delta$ each contribute 1 to each side of Equation \ref{eqn} to give a total contribution of $0$ on each side. 
\newline
\newline
Lastly, suppose that $D_1'<D_1$ contains one arrow $x$ from $\Delta$ and one arrow $\eta \not \in \Delta$ that intersects $x$. To create the corresponding subdiagram $D_2'$ in $D_2$, simply change the direction and sign of $x$. Note that the endpoints of $\eta$ lie in separate dashed regions of the real line in Figure \ref{fig_config}. For each $x \in \Delta$, observe in Figure \ref{fig_config} that there is some $y \in \Delta$, $y \ne x$, such that $\eta$ intersects $y$. Now, either $x$ and $y$ point in the same direction or in opposite directions in $D_1$. If they point in the same direction, the pairs $x,\eta$ and $y,\eta$ each contribute 1 to one side of Equation \ref{eqn} and both contribute zero to the other side. If $x$ and $y$ point in the opposite direction, one of the pairs $x,\eta$ and $y,\eta$ contributes 1 on the left of Equation \ref{eqn} while the other pair contributes 1 on the right.
\end{proof}

\begin{figure}[htb]
\begin{tabular}{|c|cc|} \hline
 & & \\
\begin{tabular}{c}
\def\svgwidth{1in}
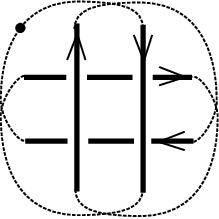 \end{tabular}  &\begin{tabular}{c} \def\svgwidth{1.5in}
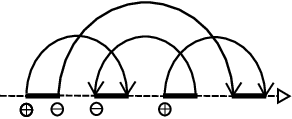 \end{tabular} & \begin{tabular}{c} \def\svgwidth{1.5in}
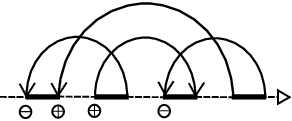 \end{tabular} \\
\begin{tabular}{c}
\def\svgwidth{1in}
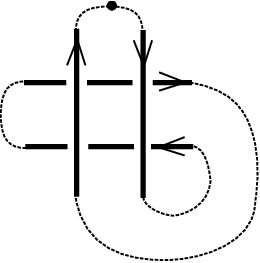 \end{tabular}  & \begin{tabular}{c} \def\svgwidth{1.5in}
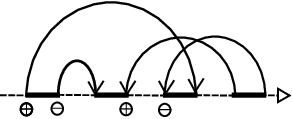 \end{tabular} & \begin{tabular}{c} \def\svgwidth{1.5in}
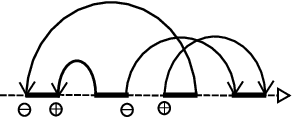 \end{tabular}\\
\begin{tabular}{c}
\def\svgwidth{1in}
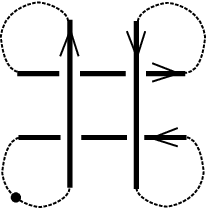 \end{tabular}  & \begin{tabular}{c} \def\svgwidth{1.5in}
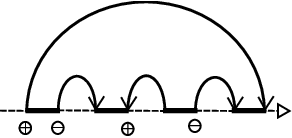 \end{tabular} & \begin{tabular}{c} \def\svgwidth{1.5in}
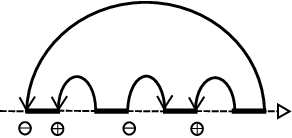 \end{tabular}\\ \hline  & & \\ 
\begin{tabular}{c}
\def\svgwidth{1in}
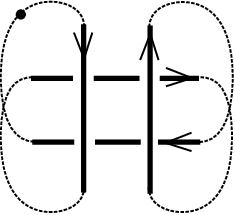 \end{tabular}  & \begin{tabular}{c} \def\svgwidth{1.5in}
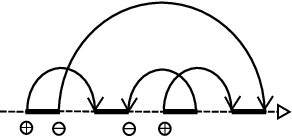 \end{tabular} & \begin{tabular}{c} \def\svgwidth{1.5in}
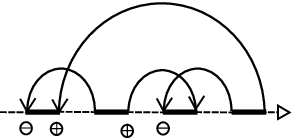 \end{tabular} \\
\begin{tabular}{c}
\def\svgwidth{1in}
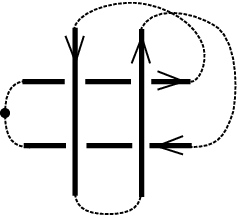 \end{tabular} & \begin{tabular}{c} \def\svgwidth{1.5in}
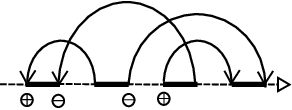 \end{tabular} & \begin{tabular}{c} \def\svgwidth{1.5in}
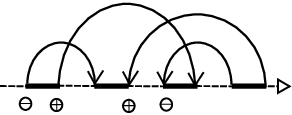 \end{tabular} \\ \hline
\end{tabular}
\caption{The five possible configurations of arcs, and the corresponding Gauss diagrams for the left and right hand sides of the band-pass move.} \label{fig_config}
\end{figure}

\begin{theorem}[Main Theorem]\label{thm_main} For every concordance class $[K] \in \mathscr{VC}$, there is a $J \in [K]$ that is not band-pass equivalent to $K$ and an $L \in [K]$ that is not band-pass equivalent to either the long unknot or a long trefoil.
\end{theorem}

\begin{proof} Again let $F$ be as in Figure \ref{fig_fly}, left. Suppose that $v_{21}(K)+v_{2,2}(K) \equiv 1\pmod 2$. Since $v_{2,1}+v_{2,2} \pmod{2}$ vanishes for any classical knot, $L:=K$ is not band-pass equivalent to either a long unknot or a long trefoil. Set $J:=K\#F$. Then $v_{2,1}(J)+v_{2,2}(J) \equiv 0 \pmod{2}$, since the invariant is additive under concatenation. Since $F$ is concordant to the unknot, we have $J=K\# F \asymp K$.  Thus, $J$ is concordant to $K$ but $J$ is not band-pass equivalent to $K$. If $v_{21}(K)+v_{2,2}(K) \equiv 0 \pmod 2$, set $L:=K\#F$ and $J:=K$.
\end{proof}

\section{Further Discussion}  \label{sec_future}

\subsection{The map from long to closed} \label{sec_map} If $D$ is a Gauss diagram on $\mathbb{R}$, it may be converted into a closed Gauss diagram $\overline{D}$ on $\mathbb{S}^1$ via the map $t \to e^{2 i \arctan(t)}$. The arrow endpoints and crossing signs of $D$ are transferred to $\overline{D}$ in this way. Then $D \to \overline{D}$ descends to a surjective map $K \to \overline{K}$ from long virtual knots to virtual knots \cite{GPV}. Definition \ref{defn_concordance} implies that if $K_1 \asymp K_2$, then $\overline{K_1} \asymp \overline{K_2}$. Any invariant of virtual knot concordance is thus an invariant of long virtual knot concordance. 
\newline
\newline
For example, the Henrich-Turaev polynomial $w_K(t)$ is an order one finite-type concordance invariant of virtual knots \cite{vc_2,henrich, turaev_cobordism}. It takes values in the group of Laurent polynomials $\mathbb{Z}[t,t^{-1}]$. The invariant is $0$ for all classical knots. By virtue of the map $K \to \overline{K}$, $w_K(t)$ is also a finite-type concordance invariant of long virtual knots. Moreover, $w_K:\mathscr{VC} \to \mathbb{Z}[t,t^{-1}]$ is a homomorphism: $w_{K_1\#K_2}(t)=w_{K_1}(t)+w_{K_2}(t)$.
 
\begin{proposition} If $K \asymp -r(K)$, then $K$ has order $1$ or $2$ in $\mathscr{VC}$. There are elements of $\mathscr{VC}$ that are not concordant to a classical knot and have infinite order.
\end{proposition}

\begin{proof} The first fact follows from $K \# K \asymp K \# -r(K) \asymp 1$. Suppose that $K^m \asymp 1$ for some natural number $m$. Then $0=w_{K^m}(t)=m \cdot w_{K}(t)$.  Thus any element with non-vanishing $w_K(t)$ must have infinite order. Consider the long virtual knot diagram $K$ having Gauss code $O1(+)O2(+)U1(+)U2(+)$. Since $w_{K}(t)=2t \ne 0$, $K$ cannot be concordant to any classical knot and has infinite order. \end{proof}

\subsection{Directions for future research} Here we have exclusively focused on the band-pass move for long virtual knots. For the closed case, are there any Goussarov-Polyak-Viro (GPV) finite-type invariants that are band-pass invariant? The answer is likely to be ``yes''. A computer search could resolve this question. One could compute the Polyak algebra modulo the image of the band-pass moves under the map $I$. 
\newline
\newline
The Henrich-Turaev polynomial is a finite-type concordance invariant, but it is not a finite-type invariant in the sense of GPV. For the closed or long case, are there any GPV finite-type invariants that are concordance invariants? The finite-type concordance invariants of classical string links are the Milnor invariants \cite{hm} and these can sometimes be expressed by GPV type formulae \cite{polyak_milnor}. An interesting problem is to classify the finite-type concordance invariants of long virtual knots, and more generally, virtual string links.
\newline
\newline
The group structure of $\mathscr{VC}$ is mysterious. Are there finite order elements in $\mathscr{VC}$ that are not concordant to any classical knot? Are there, for example, any elements having order four? The monoid of long virtual knots is not commutative. Is $\mathscr{VC}$ abelian (essentially posed by Turaev \cite{turaev_cobordism})? The algebraic concordance group is a useful tool for studying the classical knot concordance group. Is there something analogous for long virtual knots?

\begin{acknowledgments} The author is grateful for advice and encouragement from H. U. Boden, H. A. Dye, C. D. Frohman, R. Gaudreau, A. Kaestner, L. H. Kauffman, A. Nicas and R. Todd. 
\end{acknowledgments}
\bibliographystyle{plain}
\bibliography{bib_bandpass}

\end{document}